\documentclass[11pt,a4paper]{amsart}

\usepackage[latin1]{inputenc}
\usepackage[english]{babel}
\usepackage{amsmath}

\usepackage{amssymb, mathabx}
\usepackage{graphicx}
\usepackage{braket}
\usepackage[pdftex,plainpages=false,colorlinks,hyperindex,bookmarksopen,linkcolor=red,citecolor=blue,urlcolor=blue]{hyperref}

\usepackage{cite}

\usepackage{mathrsfs}

\usepackage{epstopdf}

\usepackage{braket}

\usepackage[hmargin=3cm,vmargin={3.5cm,4cm}]{geometry}

\theoremstyle{theorem}
\newtheorem{thm}{Theorem}

\theoremstyle{definition}                                 

\theoremstyle{definition}                           

\theoremstyle{remark}                             
\newtheorem*{rmk}{Remark}              

\usepackage{color}

\usepackage{mathtools,slashed}

\newcommand{\be}{\begin{eqnarray}}
\newcommand{\ee}{\end{eqnarray}}

\newcommand{\R}{\mathbb{R}}  
\newcommand{\C}{\mathbb{C}} 
\newcommand{\N}{\mathbb{N}} 
\def\eg{{\it e.g. }} 
\def\ie{{\it i.e. }}

\newcommand{\wt}[1]{\widetilde{#1}}




\def\eg{{\it e.g.}\ }
\def\ie{{\it i.e.}\ }





\numberwithin{equation}{section}

\allowdisplaybreaks

\begin{document}
\title{A comment on some new definitions of fractional derivative}
	
	    \author{Andrea Giusti$^\dagger$}
		\address{${}^\dagger$ Department of Physics $\&$ Astronomy, University of 	
    	    Bologna and INFN. Via Irnerio 46, Bologna, ITALY and 
	    	 Arnold Sommerfeld Center, Ludwig-Maximilians-Universit\"at, 
	    	 Theresienstra{\ss}e~37, 80333 M\"unchen, GERMANY.}	
 		\email{agiusti@bo.infn.it}
 
    \keywords{Prabhakar function, Mittag-Leffler function, Caputo-Fabrizio derivative, Atangana-Baleanu derivative}

	\thanks{ }	
	
    \date  {\today}

\begin{abstract}
After reviewing the definition of two differential operators which have been recently introduced by Caputo and Fabrizio and, separately, by Atangana and Baleanu, we present an argument for which these two integro-differential operators can be understood as simple realizations of a much broader class of fractional operators, \ie the theory of Prabhakar fractional integrals. Furthermore, we also provide a series expansion of the Prabhakar integral in terms of Riemann-Liouville integrals of variable order. Then, by using this last result we finally argue that the operator introduced by Caputo and Fabrizio cannot be regarded as fractional. Besides, we also observe that the one suggested by Atangana and Baleanu is indeed fractional, but it is ultimately related to the ordinary Riemann-Liouville and Caputo fractional operators. All these statements are then further supported by a precise analysis of differential equations involving the aforementioned operators. To further strengthen our narrative, we also show that these new operators do not add any new insight to the linear theory of viscoelasticity when employed in the constitutive equation of the Scott-Blair model.
\end{abstract}

    \maketitle


\section{Introduction}
	The mathematical literature, mostly in the last three years, has seen an intensive inflation of papers proposing some ``new'' or ``alternative'' notions of fractional derivatives.
	
	In this letter we aim to discuss the connection between two of the most known new operators, namely the Caputo-Fabrizio (CF) operator and the regularized Atangana-Baleanu operator (ABC), with the theory of Prabhakar fractional integrals.
	
	First, let us start by recalling the definition of the CF operator \cite{Caputo-Fabrizio},
	\be \label{CF-derivative} 
	^{CF} \textbf{D} _{a+} ^\alpha f (t) = 
	\frac{M(\alpha)}{1 - \alpha} \int _a ^t  \exp \left[ - \frac{\alpha}{1 - \alpha} \, (t - \tau) \right] \, f' (\tau) \, d \tau \, ,
	\ee
	where $f \in L^1 \left(a, \, b \right)$, $b>a$, $f'(t)$ represents the first derivative of $f(t)$, $M(\alpha)$ is a normalization constant such that $M(0) = M(1) = 1$ and $0 < \alpha < 1$.

	A simple and natural generalization of the CF operator can be obtained by replacing the exponential kernel in \eqref{CF-derivative} with a function involving the Mittag-Leffler function \cite{FM-ML} (with one parameter), \ie
	\be \label{ML-1}
	E_\alpha (z) = \sum _{k=0} ^\infty \frac{z^k}{\Gamma (\alpha \, k + 1)} \, , 
	\qquad z, \alpha \in \C, \, \texttt{Re}(\alpha) >0 \, .
	\ee
	
	Indeed, if we consider the kernel
	$$ ^{ABC} \mathcal{K} _\alpha (t) := E_\alpha \left( - \frac{\alpha}{1 - \alpha} \, t^\alpha \right) \, , $$
	this leads to the so called Atangana-Baleanu operator in the Caputo sense (ABC derivative), that reads \cite{ABC}
	\be \label{ABC-derivative} 
	^{ABC} \textbf{D} _{a+} ^\alpha f (t) = 
	\frac{B (\alpha)}{1 - \alpha} \int _a ^t  E_\alpha \left[ - \frac{\alpha}{1 - \alpha} \, (t - \tau) ^\alpha \right] \, f' (\tau) \, d \tau \, ,
	\ee
	where $f \in H^1 \left(a, \, b \right)$, $b>a$, $B(\alpha)$ is a normalization constant such that $B(0) = B(1) = 1$ and $0 < \alpha < 1$, in analogy with the CF case.
	
	It is now important to discuss some aspects concerning the notion of fractional derivative from an historical and terminological perspective, as these observations will turn out to be useful for the concluding remarks. 
	
	The ``old fashioned'' formulation of fractional calculus \cite{Mainardi-Gorenflo-1997, Mainardi-1997} presents the notion of fractional derivative as a concept strongly connected with the Riemann-Liouville fractional integral. The latter, in fact, represent a natural extension (just an analytic continuation) of the well known Cauchy formula for repeated integration, justifying the attribute ``fractional'' appearing in its name.  
	
	According to this perspective, one can proceed to define some fractional derivatives as the left-inverse of the Riemann-Liouville integral. This is the case of the Riemann-Liouville, Caputo, Gr{\"u}nwald-Letnikov and several other derivatives (see \eg \cite{Mainardi_BOOK10, Mainardi-Gorenflo-1997, Mainardi-1997, SKM}). 
	
	On the same line of thought, the attribute ``fractional'' would still fit, in the old fashioned sense, in connection with differential operators representing the left-inverse of a generalization of the Riemann-Liouville integral. This is indeed the case of the Prabhakar derivative \cite{GGPT, GarraGarrappa, Garrappa, RG-FM-GM, PT, ST, GC, HMS, KSS, MG}.

	Nonetheless, it is worth remarking that some more modern criteria for the definition of fractional derivative have been proposed by Ortigueira and Tenreiro Machado in \cite{fractional}, even though the ``old fashioned'' view is already more then enough for supporting the arguments presented in this paper.

\section{Prabhakar function and integral kernel}
	One of the most important generalizations of the Mittag-Leffler function \eqref{ML-1} is the so called Prabhakar function (see \eg \cite{PBK, CGV, GGPT, GarraGarrappa, Garrappa, RG-FM-GM, HMS, KSS, MG, PT, ST}), also known as the three parameters Mittag-Leffler function, which is defined by its series representation as
	\be \label{PBK} 
	E ^\gamma _{\alpha , \beta} (z) = 
	\sum _{k=0} ^\infty \frac{(\gamma) _k}{\Gamma (\alpha \, k + \beta)} \frac{z^k}{k!} \, ,
	\ee 
	where $z \in \C$, $\alpha, \beta, \gamma \in \C$, $\texttt{Re}(\alpha) > 0$, and where $(\gamma) _k$ is the Pochhammer (rising factorial) symbol, that can also be rewritten as $ (\gamma) _k = \Gamma (\gamma + k) / \Gamma (\gamma)$.
	
	Now, starting from \eqref{PBK} it is possible to define a function, known as the Prabhakar kernel and given by
	\be \label{P-kernel} 
	e ^\gamma _{\alpha , \beta} (\omega ; \, t) := t^{\beta - 1} \, E ^\gamma _{\alpha , \beta} (\omega t^\alpha) \, ,
	\ee
	where $t \in \R$, $\alpha, \beta, \gamma, \omega \in \C$ and $\texttt{Re}(\alpha) > 0$.
	
	This allows us to introduce the so called Prabhakar fractional integral \cite{KSS}, \ie
	\be \label{P-integral} 
	 \textbf{E} ^\gamma _{\alpha , \beta , \omega, a+} \, f (t) 
	= \int _a ^t e ^\gamma _{\alpha , \beta} (\omega ; \, t - \tau)  \, 
	f(\tau) \, d \tau
	\, ,
	\ee
	where $0 \leq a < t < b \leq + \infty$ and $f \in L^1 \left(a, \, b\right)$.
	
	The Prabhakar function defined in Eq.~\eqref{PBK} is an entire function of order $\texttt{Re} (\alpha) ^{-1}$ and type one provided that $\alpha, \beta, \gamma \in \C$, $\texttt{Re}(\alpha) > 0$. We can then use this property to prove the following non-trivial result
	\begin{thm} \label{thm-1}
	Let  $f \in L^1 \left(a, \, b\right)$, with $b>a$ and $a, \, b \in \R$. Then,
	\be 
	\textbf{E} ^\gamma _{\alpha , \beta , \omega, a+} \, f (t) = \sum _{k=0} ^\infty \frac{(\gamma) _k \, \omega ^k}{k!} \,  J _{a+} ^{\alpha k + \beta} f(t).
	\ee
	provided that $\alpha, \beta \in \R^+$, $\gamma, \omega \in \C$ and where 
	$$ J_{a+} ^\sigma f(t) := \frac{1}{\Gamma (\sigma)} \int _a ^t f(\tau) \, (t - \tau) ^{\sigma-1} \, d \tau \, , $$
	is the Riemann-Liouville fractional integral, with $\sigma \in \R^+$.
	\end{thm}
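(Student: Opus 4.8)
The plan is to expand the Prabhakar kernel appearing in \eqref{P-integral} into its defining power series and then to integrate term by term; the series that results is precisely the announced combination of Riemann--Liouville integrals, so the real content of the proof is the justification of the interchange of $\sum$ and $\int$. Starting from \eqref{PBK} and \eqref{P-kernel}, and using that $\alpha,\beta\in\R^{+}$ forces $\Gamma(\alpha k+\beta)\neq0$ for every $k\in\N$, one writes
\be
e^{\gamma}_{\alpha,\beta}(\omega;\,t-\tau)=\sum_{k=0}^{\infty}\frac{(\gamma)_{k}\,\omega^{k}}{k!\,\Gamma(\alpha k+\beta)}\,(t-\tau)^{\alpha k+\beta-1},\qquad a\le\tau<t<b .
\ee
Inserting this into $\textbf{E}^{\gamma}_{\alpha,\beta,\omega,a+}f(t)$ and pulling the summation outside the integral gives $\sum_{k}\frac{(\gamma)_{k}\omega^{k}}{k!\,\Gamma(\alpha k+\beta)}\int_{a}^{t}(t-\tau)^{\alpha k+\beta-1}f(\tau)\,d\tau=\sum_{k}\frac{(\gamma)_{k}\omega^{k}}{k!}\,J_{a+}^{\alpha k+\beta}f(t)$, which is the claim.

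To legitimise the term-by-term integration I would appeal to Fubini--Tonelli for the product of Lebesgue measure on $(a,b)$ with the counting measure on $\N$; it suffices to check that
\be
\sum_{k=0}^{\infty}\frac{|(\gamma)_{k}|\,|\omega|^{k}}{k!\,\Gamma(\alpha k+\beta)}\int_{a}^{t}(t-\tau)^{\alpha k+\beta-1}\,|f(\tau)|\,d\tau<\infty
\ee
for a.e.\ $t$, or (more robustly) that the corresponding series of $L^{1}(a,b)$--norms converges. For this one combines three elementary facts: $|(\gamma)_{k}|=\prod_{j=0}^{k-1}|\gamma+j|\le\prod_{j=0}^{k-1}(|\gamma|+j)=:(|\gamma|)_{k}$; the identity $\int_{a}^{t}(t-\tau)^{\alpha k+\beta-1}|f(\tau)|\,d\tau=\Gamma(\alpha k+\beta)\,(J_{a+}^{\alpha k+\beta}|f|)(t)$, which makes the offending $\Gamma$ cancel; and the Young-type bound $\|J_{a+}^{\sigma}|f|\|_{L^{1}(a,b)}\le(b-a)^{\sigma}\|f\|_{L^{1}(a,b)}/\Gamma(\sigma+1)$. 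Together these majorise the series of $L^{1}$--norms by $\|f\|_{L^{1}(a,b)}(b-a)^{\beta}\,E^{|\gamma|}_{\alpha,\beta+1}\!\big(|\omega|(b-a)^{\alpha}\big)$, and the point is that this quantity is finite: since $|\gamma|\ge0$ and $\alpha>0$, the function $E^{|\gamma|}_{\alpha,\beta+1}$ is entire — exactly the order/type statement recalled just before the theorem — so it is finite at the argument $|\omega|(b-a)^{\alpha}$. Hence Fubini--Tonelli applies, the interchange is valid, and the series $\sum_{k}\frac{(\gamma)_{k}\omega^{k}}{k!}J_{a+}^{\alpha k+\beta}f$ converges absolutely in $L^{1}(a,b)$.

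I expect the only delicate points to be bookkeeping rather than analysis. First, for the finitely many indices with $\alpha k+\beta<1$ the factor $(t-\tau)^{\alpha k+\beta-1}$ is singular at $\tau=t$; because $\alpha,\beta>0$ give $\alpha k+\beta>0$, this singularity is integrable and each $J_{a+}^{\alpha k+\beta}f$ is a bona fide element of $L^{1}(a,b)$, so these terms are harmless and are already swallowed by the estimate above. Second, the identity is most naturally an equality in $L^{1}(a,b)$, i.e.\ for almost every $t$, which is the right notion given $f\in L^{1}(a,b)$; pointwise equality wherever the left-hand side is defined then follows from the absolute convergence just established. In short, the entire-function property of the Prabhakar function supplies the convergence of the dominating series for free, and what remains is to state the conclusion in the correct function space and to dispatch the handful of singular terms.
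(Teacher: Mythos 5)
Your proof is correct and follows essentially the same route as the paper's: expand the Prabhakar kernel via its series \eqref{PBK} and integrate term by term to produce the Riemann--Liouville integrals. The only difference is that you spell out the Fubini--Tonelli justification of the interchange (via the bound $|(\gamma)_k|\le(|\gamma|)_k$ and the $L^1$ estimate on $J_{a+}^{\sigma}$, summing to a finite Prabhakar value), a step the paper leaves implicit, merely alluding to the entire-function property of $E^{\gamma}_{\alpha,\beta}$ before the theorem.
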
	
	
	\begin{proof}
	From Eq.~\eqref{PBK} and Eq.~\eqref{P-integral} we have
	\begin{equation*}
	\begin{split}
	\textbf{E} ^\gamma _{\alpha , \beta , \omega, a+} \, f (t) &= 
	\int _a ^t e ^\gamma _{\alpha , \beta} (\omega ; \, t - \tau) \, f(\tau) \, d \tau =\\
	&= \sum _{k=0} ^\infty \frac{(\gamma) _k \, \omega ^k}{\Gamma (\alpha k + \beta) \, k!} 
	\int _a ^t(t - \tau) ^{\alpha k + \beta - 1} \, f(\tau) \, d\tau =\\
	&= \sum _{k=0} ^\infty \frac{(\gamma) _k \, \omega ^k}{k!} \, J_{a+} ^{\alpha k + \beta} f(t)
	\end{split}
	\end{equation*}
	\end{proof}
	
	\section{CF and ABC operators $\&$ Prabhakar fractional integrals}
	In \cite{GC} it was argued that the Caputo-Fabrizio operator can actually be reinterpreted as a simple realization of a Prabhakar fractional integral. Indeed, if we recall that
	$$ E ^1 _{1 , 1} (\omega t) = \exp (\omega \, t) \, ,$$
	therefore
	$$  e ^1 _{1 , 1} (\omega ; \, t) = \exp (\omega \, t) \, , $$
	then one can easily infer that
	\begin{thm} \label{thm-2}
	Let $f \in AC (a, b)$, with $b>a , \, a, b \in \R$ and let $0 < \alpha < 1$, then
	\be 
	^{CF} \textbf{D} _{a+} ^\alpha f (t) = \frac{M(\alpha)}{1 - \alpha} \, \textbf{E} ^1 _{1 , 1 , \omega (\alpha), a+} \,  f' (t) \, .
	\ee
	where $\omega (\alpha) = - \alpha / (1 - \alpha)$.
	\end{thm}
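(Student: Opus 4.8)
The plan is to verify the identity by a direct reduction to the definitions, since both sides are explicit integral operators applied to $f'$. First I would invoke the regularity hypothesis: because $f \in AC(a,b)$, its derivative $f'$ exists almost everywhere on $(a,b)$ and belongs to $L^1(a,b)$. This is exactly what is needed for the Prabhakar integral \eqref{P-integral} to be applicable to $f'$, so that $\textbf{E} ^1 _{1 , 1 , \omega(\alpha), a+}\, f'(t)$ is well-defined; simultaneously it guarantees that the integral on the right-hand side of \eqref{CF-derivative} makes sense, so the equality to be proved is between two honestly defined quantities.

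Next I would compute the Prabhakar kernel \eqref{P-kernel} in the degenerate case $\alpha=\beta=\gamma=1$. Using \eqref{PBK} together with the simplification $(1)_k = \Gamma(1+k)/\Gamma(1) = k!$, one finds $E^1_{1,1}(z) = \sum_{k=0}^\infty z^k/k! = \e^{z}$, hence $e^1_{1,1}(\omega;\,t) = t^{1-1} E^1_{1,1}(\omega t) = \e^{\omega t}$, precisely the identity recalled just above the statement. Substituting $\omega = \omega(\alpha) = -\alpha/(1-\alpha)$ into \eqref{P-integral} then gives
\[
\textbf{E} ^1 _{1 , 1 , \omega(\alpha), a+}\, f'(t) = \int_a^t e^1_{1,1}\!\left(\omega(\alpha);\, t-\tau\right) f'(\tau)\,d\tau = \int_a^t \exp\!\left[-\frac{\alpha}{1-\alpha}\,(t-\tau)\right] f'(\tau)\,d\tau .
\]
Multiplying both sides by $M(\alpha)/(1-\alpha)$ reproduces exactly the right-hand side of \eqref{CF-derivative}, which is by definition $^{CF}\textbf{D}_{a+}^\alpha f(t)$, and the theorem follows.

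There is no genuine obstacle in this argument; every step is the unwinding of a definition, and the only places calling for a word of care are the role of the assumption $f\in AC(a,b)$ (so that $f'$ is an admissible integrand for $\textbf{E}^1_{1,1,\omega(\alpha),a+}$) and the elementary special-function identity $E^1_{1,1}(z)=\e^z$. Alternatively, one could obtain the result as an immediate corollary of Theorem~\ref{thm-1}: taking $\alpha=\beta=\gamma=1$ there and applying it to $f'$ collapses the series to $\sum_{k\ge0}\omega(\alpha)^k\,J_{a+}^{k+1}f'$, which one recognizes as the expansion of $\int_a^t \e^{\omega(\alpha)(t-\tau)}f'(\tau)\,d\tau$; but the direct kernel computation above is the cleaner route.
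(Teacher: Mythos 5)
Your argument is correct and is essentially the paper's own proof: both simply use the identity $e^{1}_{1,1}(\omega;\,t)=E^{1}_{1,1}(\omega t)=\exp(\omega t)$ to rewrite the exponential kernel in \eqref{CF-derivative} as the Prabhakar kernel and identify the resulting integral with $\textbf{E}^{1}_{1,1,\omega(\alpha),a+}f'(t)$. Your extra remarks on why $f\in AC(a,b)$ makes $f'\in L^{1}(a,b)$ an admissible integrand, and the alternative route via Theorem~\ref{thm-1}, are fine but not a different method.
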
	
	
	\begin{proof}
	Indeed, if we consider the definition in Eq.~\eqref{CF-derivative} and the fact that $e ^1 _{1 , 1} (\omega ; \, t) = E ^1 _{1 , 1} (\omega t) = \exp (\omega \, t)$, we have
	\be 
	\notag
	^{CF} \textbf{D}_{a+} ^\alpha f (t) &\!\!=\!\!& 
	\frac{M(\alpha)}{1 - \alpha} \int _a ^t  \exp \left[ - \frac{\alpha}{1 - \alpha} \, (t - \tau) \right] \, f' (\tau) \, d \tau =\\ \notag
	&\!\!=\!\!& 
	\frac{M(\alpha)}{1 - \alpha} \int _a ^t  E ^1 _{1 , 1} \left[ - \frac{\alpha}{1 - \alpha} \, (t - \tau) \right] \, f' (\tau) \, d \tau =\\ \notag
	&\!\!=\!\!&
	\frac{M(\alpha)}{1 - \alpha} \int _a ^t  e ^1 _{1 , 1} \left(- \frac{\alpha}{1 - \alpha} ; \, t - \tau\right) \, f' (\tau) \, d \tau =\\ \notag
	&\!\!=\!\!& \frac{M(\alpha)}{1 - \alpha} \, \textbf{E} ^1 _{1 , 1 , \omega (\alpha), a+} \,  f' (t) \, .
	\ee  
	\end{proof}
	
	\begin{rmk}
	Here $AC (a, b)$ represents the class of absolutely continuous functions. For these kind of functions it is important to recall that if $f \in AC (a, b)$ then $f$ is differentiable on $(a, b)$ and $f' \in L^{1} (a, b)$.
	\end{rmk}
	
	A similar argument can be presented also for the Atangana-Baleanu operator (in the Caputo sense).
	
	\begin{thm} \label{thm-3}
	Let $f \in AC (a, b)$, with $b>a , \, a, b \in \R$ and let $0 < \alpha < 1$, then
	\be
	^{ABC} \textbf{D} _{a+} ^\alpha f (t) = \frac{B(\alpha)}{1 - \alpha} \, \textbf{E} ^1 _{\alpha , 1 , \omega (\alpha), a+} \,  f' (t) \, .
	\ee
	where, again, $\omega (\alpha) = - \alpha / (1 - \alpha)$.
	\end{thm}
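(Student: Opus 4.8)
The plan is to mirror exactly the argument used for Theorem~\ref{thm-2}, replacing the one-parameter exponential kernel with the two-parameter Mittag-Leffler kernel and identifying it as a special Prabhakar kernel. First I would recall the explicit reduction of the Prabhakar function when $\gamma = \beta = 1$ but $\alpha$ is left free: from the series \eqref{PBK},
\[
E^{1}_{\alpha,1}(z) = \sum_{k=0}^{\infty} \frac{(1)_k}{\Gamma(\alpha k + 1)}\frac{z^k}{k!} = \sum_{k=0}^{\infty} \frac{z^k}{\Gamma(\alpha k + 1)} = E_{\alpha}(z),
\]
since $(1)_k = k!$. Hence the Prabhakar kernel \eqref{P-kernel} with $\beta = 1$, $\gamma = 1$ collapses to
\[
e^{1}_{\alpha,1}(\omega;\,t) = t^{0}\,E^{1}_{\alpha,1}(\omega t^{\alpha}) = E_{\alpha}(\omega t^{\alpha}),
\]
which is precisely the kernel $^{ABC}\mathcal{K}_\alpha$ appearing in \eqref{ABC-derivative} once we set $\omega = \omega(\alpha) = -\alpha/(1-\alpha)$.

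Next I would substitute this identification directly into the definition \eqref{ABC-derivative}: writing the integrand $E_\alpha\!\left[-\tfrac{\alpha}{1-\alpha}(t-\tau)^\alpha\right]$ as $e^{1}_{\alpha,1}(\omega(\alpha);\,t-\tau)$, the integral $\int_a^t e^{1}_{\alpha,1}(\omega(\alpha);\,t-\tau)\,f'(\tau)\,d\tau$ is by the definition \eqref{P-integral} exactly $\textbf{E}^{1}_{\alpha,1,\omega(\alpha),a+}\,f'(t)$, so that
\[
^{ABC}\textbf{D}_{a+}^{\alpha} f(t) = \frac{B(\alpha)}{1-\alpha}\,\textbf{E}^{1}_{\alpha,1,\omega(\alpha),a+}\,f'(t),
\]
as claimed. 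The hypothesis $f \in AC(a,b)$ ensures, via the Remark, that $f$ is differentiable on $(a,b)$ with $f' \in L^1(a,b)$, which is precisely the regularity required for the Prabhakar integral \eqref{P-integral} to be well-defined when applied to $f'$; this is the only point where the function-space assumption is used.

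I do not expect any serious obstacle: the argument is purely a matter of matching definitions, and the only content is the algebraic reduction $(1)_k = k!$ that turns the three-parameter Prabhakar function into the one-parameter Mittag-Leffler function. The one thing worth double-checking is that the original ABC definition \eqref{ABC-derivative} uses $E_\alpha$ with lower parameter $\beta = 1$ (not $\beta = \alpha$, as in some variants in the literature), since it is this choice that makes the kernel $t^{\beta-1}E^{\gamma}_{\alpha,\beta}(\omega t^\alpha)$ lose its prefactor $t^{\beta-1}$; with $\beta = 1$ the prefactor is $t^0 = 1$ and the match is exact. If one wished, one could also invoke Theorem~\ref{thm-1} afterwards to expand $\textbf{E}^{1}_{\alpha,1,\omega(\alpha),a+}\,f'$ as a series of Riemann–Liouville integrals $\sum_{k\geq 0}\frac{\omega(\alpha)^k}{k!}\cdot k!\,\,J_{a+}^{\alpha k+1}f' = \sum_{k\geq 0}\omega(\alpha)^k\,J_{a+}^{\alpha k+1}f'$, but this is not needed for the statement itself.
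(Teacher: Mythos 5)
Your proposal is correct and follows essentially the same route as the paper: identify $e^{1}_{\alpha,1}(\omega;\,t)=E_\alpha(\omega t^\alpha)$ (via $(1)_k=k!$, which the paper takes as given) and then rewrite the ABC definition \eqref{ABC-derivative} as the Prabhakar integral \eqref{P-integral} applied to $f'$ with $\omega=\omega(\alpha)$. Your extra remarks on the role of $f\in AC(a,b)$ and on $\beta=1$ are fine but not needed beyond what the paper's proof does.
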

	
	\begin{proof}
	Indeed, given that
	$$ e ^1 _{\alpha , 1} (\omega ; \, t) =  E_\alpha (\omega \, t ^\alpha) \, ,  $$
	it is not difficult to see that
	\be 
	\notag
	^{ABC} \textbf{D} _{a+} ^\alpha f (t) &\!\!=\!\!& 
	\frac{B (\alpha)}{1 - \alpha} \int _a ^t  E_\alpha \left[ - \frac{\alpha}{1 - \alpha} \, (t - \tau) ^\alpha \right] \, f' (\tau) \, d \tau =\\ \notag
	&\!\!=\!\!&
	\frac{B(\alpha)}{1 - \alpha} \int _a ^t  e ^1 _{\alpha , 1} \left(- \frac{\alpha}{1 - \alpha} ; \, t - \tau\right) \, f' (\tau) \, d \tau =\\ \notag
	&\!\!=\!\!&
	\frac{B(\alpha)}{1 - \alpha} \, \textbf{E} ^1 _{\alpha , 1 , \omega (\alpha), a+} \,  f' (t) \, .
	\ee 
	\end{proof}

	Using these results, it is now interesting to see if one can infer something about the fractional nature of the CF and ABC operators and whether they are really needed on their own.
	
	Starting with the Caputo-Fabrizio operator it is easy to see that
	
	\begin{thm} \label{thm-4}
	Let $f \in AC (a, b)$, with $b>a , \, a, b \in \R$ and let $0 < \alpha < 1$, then
	\begin{equation} \label{eq-thm-4}
	\begin{split}
	^{CF} \textbf{D} _{a+} ^\alpha f (t) = &- \frac{M(\alpha) \, f(a^+)}{1 - \alpha} \, \exp \left[ - \frac{\alpha \, (t - a)}{1 - \alpha} \right] \\
	 &+ \frac{M(\alpha)}{1 - \alpha} \, \sum _{k = 0} ^\infty \left(- \frac{\alpha}{1 - \alpha}\right)^k \, J _{a+} ^k f(t) \, .
	\end{split}
	\end{equation}
	\end{thm}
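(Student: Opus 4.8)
The plan is to feed the Prabhakar representation of the Caputo--Fabrizio operator obtained in Theorem~\ref{thm-2} into the expansion provided by Theorem~\ref{thm-1}, and then to perform a single integration by parts inside the resulting series so as to transfer the derivative off $f$ and onto a constant.

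First, since $f\in AC(a,b)$ one has $f'\in L^{1}(a,b)$ (see the Remark above), so Theorems~\ref{thm-1} and~\ref{thm-2} may be applied to $f'$. Writing $\omega=\omega(\alpha)=-\alpha/(1-\alpha)$, Theorem~\ref{thm-2} combined with Theorem~\ref{thm-1} applied with unit Prabhakar parameters (so that $(\gamma)_{k}/k!=1$ and the Riemann--Liouville order is $k+1$) gives
\[
{}^{CF}\textbf{D}_{a+}^{\alpha}f(t)=\frac{M(\alpha)}{1-\alpha}\,\textbf{E}^{1}_{1,1,\omega,a+}\,f'(t)=\frac{M(\alpha)}{1-\alpha}\sum_{k=0}^{\infty}\omega^{k}\,J_{a+}^{k+1}f'(t).
\]
The key step is the elementary identity
\[
J_{a+}^{k+1}f'(t)=J_{a+}^{k}f(t)-f(a^{+})\,\frac{(t-a)^{k}}{k!},\qquad k\geq 0,
\]
which I would establish by writing $J_{a+}^{k+1}=J_{a+}^{k}\,J_{a+}^{1}$ (semigroup property of the Riemann--Liouville integral), invoking the fundamental theorem of calculus for absolutely continuous functions in the form $J_{a+}^{1}f'(t)=\int_{a}^{t}f'(\tau)\,d\tau=f(t)-f(a^{+})$, and then using linearity together with $J_{a+}^{k}[1](t)=(t-a)^{k}/k!$ (the case $k=0$ being trivial, since $J_{a+}^{0}$ is the identity). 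Substituting this identity into the series, splitting it into two pieces and recognizing $\sum_{k\geq 0}\big(\omega(t-a)\big)^{k}/k!=\exp[\omega(t-a)]=\exp[-\alpha(t-a)/(1-\alpha)]$ produces exactly \eqref{eq-thm-4}.

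The only point requiring care is the legitimacy of manipulating the infinite series term by term — both the integration by parts carried out under the summation sign and the subsequent splitting into two separately convergent series. This causes no trouble: from $|J_{a+}^{k}g(t)|\leq \|g\|_{L^{1}(a,b)}\,(t-a)^{k-1}/\Gamma(k)$ for $k\geq 1$ (with $g=f$ and $g=f'$) every series involved converges absolutely and uniformly for $t$ in compact subsets of $(a,b)$, so all rearrangements are justified; I do not anticipate any deeper obstacle.
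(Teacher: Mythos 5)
Your proposal is correct and follows essentially the same route as the paper's own proof: combine Theorem~\ref{thm-2} with Theorem~\ref{thm-1} (using $(1)_k=k!$), apply the identity $J_{a+}^{k+1}f'(t)=J_{a+}^{k}f(t)-f(a^{+})(t-a)^{k}/k!$, and resum the exponential series. The only difference is that you spell out the justification of that identity (semigroup property plus the fundamental theorem of calculus for $AC$ functions) and the uniform convergence allowing the termwise splitting, details the paper leaves implicit.
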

	
	\begin{proof}
	Due to Theorem~\ref{thm-2}, one just need to observe that
	\be 
	^{CF} \textbf{D}_{a+} ^\alpha f (t) = \frac{M(\alpha)}{1 - \alpha} \, \textbf{E} ^1 _{1 , 1 , \omega (\alpha), a+} \,  f' (t) \, ,
	\ee
	with $\omega (\alpha) = - \alpha / (1 - \alpha)$.
	
	Therefore, because of Theorem~\ref{thm-1}
	\be \label{aux}
	^{CF} \textbf{D} _{a+} ^\alpha f (t) = \frac{M(\alpha)}{1 - \alpha} \, 
	\sum _{k=0} ^\infty \omega (\alpha) ^k \, J _{a+} ^{k + 1} f' (t) \, ,
	\ee
	where we have used the fact that $(1)_k = k!$.
	
	Now, recalling that 
	$$ J _{a+} ^{k + 1} f' (t) = J _{a+} ^{k} \left[ f (t) - f(a) \right] = J _{a+} ^{k} f (t) - f(a) \, \frac{(t - a)^k}{k!} \, , $$
	from which one can easily connect Eq.~\eqref{aux} with Eq.~\eqref{eq-thm-4}.
	\end{proof}
	
	This result tells us that the CF operator is nothing but an infinite linear combination of ordinary (integer power) repeated integrals of the function $f(t)$ and, therefore, it does not lead to a fractional behaviour in linear systems whose dynamics is governed by such an operator.
	
	A similar result can also be obtained for the ABC operator, indeed one finds that
	\begin{thm} \label{thm-5}
	Let $f \in AC (a, b)$, with $b>a , \, a, b \in \R$ and let $0 < \alpha < 1$, then
	\begin{equation} \label{eq-thm-5}
	\begin{split}
	^{ABC} \textbf{D}_{a+} ^\alpha f (t) = &- \frac{B(\alpha) \, f(a^+)}{1 - \alpha} \, 
	E_\alpha \left[ - \frac{\alpha \, (t - a) ^\alpha}{1 - \alpha} \right] \\
	 &+ \frac{B(\alpha)}{1 - \alpha} \, \sum _{k = 0} ^\infty \left(- \frac{\alpha}{1 - \alpha}\right)^k \, J_{a+} ^{\alpha \, k} f(t) \, .
	\end{split}
	\end{equation}
	\end{thm}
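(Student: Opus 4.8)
The plan is to mirror the proof of Theorem~\ref{thm-4}, replacing the input from Theorem~\ref{thm-2} with the analogous representation from Theorem~\ref{thm-3}. First I would write, using Theorem~\ref{thm-3},
\[
^{ABC}\textbf{D}_{a+}^\alpha f(t) = \frac{B(\alpha)}{1-\alpha}\,\textbf{E}^1_{\alpha,1,\omega(\alpha),a+}\,f'(t),\qquad \omega(\alpha) = -\frac{\alpha}{1-\alpha},
\]
which is legitimate since $f\in AC(a,b)$ forces $f'\in L^1(a,b)$, so the Prabhakar integral is applied to an admissible function.

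Next I would apply Theorem~\ref{thm-1} with $\gamma=\beta=1$; since $(1)_k=k!$ and all the exponents $\alpha k + 1$ are positive reals, this yields
\[
^{ABC}\textbf{D}_{a+}^\alpha f(t) = \frac{B(\alpha)}{1-\alpha}\sum_{k=0}^\infty \omega(\alpha)^k\, J_{a+}^{\alpha k + 1} f'(t).
\]
The core step is then the same Riemann-Liouville identity used in Theorem~\ref{thm-4}: by the semigroup property and the fundamental theorem of calculus for absolutely continuous functions,
\[
J_{a+}^{\alpha k + 1} f'(t) = J_{a+}^{\alpha k}\bigl[f(t) - f(a^+)\bigr] = J_{a+}^{\alpha k} f(t) - f(a^+)\,\frac{(t-a)^{\alpha k}}{\Gamma(\alpha k + 1)},
\]
with the convention $J_{a+}^0 = \mathrm{Id}$ for the $k=0$ term.

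Substituting back and splitting the sum, the $J_{a+}^{\alpha k} f(t)$ pieces reassemble into the series appearing in \eqref{eq-thm-5}, while the remaining pieces yield
\[
-\frac{B(\alpha)\,f(a^+)}{1-\alpha}\sum_{k=0}^\infty \frac{\bigl(\omega(\alpha)\,(t-a)^\alpha\bigr)^k}{\Gamma(\alpha k + 1)} = -\frac{B(\alpha)\,f(a^+)}{1-\alpha}\,E_\alpha\!\left(-\frac{\alpha\,(t-a)^\alpha}{1-\alpha}\right),
\]
by the definition \eqref{ML-1} of the one-parameter Mittag-Leffler function, which is exactly the boundary term in \eqref{eq-thm-5}. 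The only delicate point — and the one I expect to cost the most care — is justifying the rearrangement of the single series into the two series above (equivalently, the termwise application of $J_{a+}^{\alpha k}$ to $f - f(a^+)$): this is controlled, on compact subintervals of $(a,b)$, by the entire-function growth of the Prabhakar kernel already exploited for Theorem~\ref{thm-1} together with the $L^1$ bound on $f'$; everything else is routine bookkeeping identical to the CF case.
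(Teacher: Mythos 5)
Your proposal is correct and follows essentially the same route as the paper's proof: invoke Theorem~\ref{thm-3}, expand via Theorem~\ref{thm-1} with $\gamma=\beta=1$, use $J_{a+}^{\alpha k+1}f'(t)=J_{a+}^{\alpha k}f(t)-f(a^+)\,(t-a)^{\alpha k}/\Gamma(\alpha k+1)$, and resum the boundary terms into $E_\alpha$. Your extra care about the term-by-term rearrangement is a welcome refinement of a step the paper leaves implicit, but it does not change the argument.
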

	
	\begin{proof}
	As above, from Theorem~\ref{thm-3} one has that
	\be 
	^{ABC} \textbf{D}_{a+} ^\alpha f (t) = \frac{B(\alpha)}{1 - \alpha} \, \textbf{E} ^1 _{\alpha , 1 , \omega (\alpha), a+} \,  f' (t) \, ,
	\ee
	with $\omega (\alpha) = - \alpha / (1 - \alpha)$.
	
	Thus,
	\begin{equation}
	\begin{split}
	^{ABC} \textbf{D} _{a+} ^\alpha f (t) &= \frac{B(\alpha)}{1 - \alpha} \, \sum _{k=0} ^\infty \omega (\alpha) ^k \, J _{a+} ^{\alpha k + 1} f' (t) =\\
	&= \frac{B(\alpha)}{1 - \alpha} \, \sum _{k=0} ^\infty \omega (\alpha) ^k \, J _{a+} ^{\alpha k} \left[ f (t) - f(a) \right] =\\
	&= \frac{B(\alpha)}{1 - \alpha} \, \sum _{k=0} ^\infty \omega (\alpha) ^k \left[ J _{a+} ^{\alpha k} f (t) - f(a) \, 
	\frac{(t - a) ^{\alpha k}}{\Gamma (\alpha k + 1)} \right] \, ,
	\end{split}
	\end{equation}
	from which we can recover Eq.~\eqref{eq-thm-5} just by recalling the series expansion of the one-parameter Mittag-Leffler function. 	
	\end{proof}
	
	This last theorem tells us that, after all, the ABC operator is ultimately a fractional one. However, it is nothing but an infinite series involving Riemann-Liouville integrals of order $\alpha \, k$, with $k \in \N$. This suggests that a linear system expressed in terms of such an operator leads to a fractional dynamics formally equivalent to a model involving a sort of combination of Caputo fractional derivatives.
	
	\section{On Differential Equations involving the CF and ABC operators}
	In the previous section we used an interesting property of Prabhakar integrals to hint at the fact that the CF and ABC operators might not represent a significant improvement for the literature on the theory of fractional calculus. In this section we further support this statement by analysing the structure of a general differential equation of the form
	\be \label{eq-fde}
	\mathcal{D} ^\alpha y (t) = F [t, \, y (t)] \, ,
	\ee
	with $F [t, \, y (t)]$ being a sufficiently regular functional of $t>0$ and $y(t)$, and $\mathcal{D} ^\alpha$ representing either the CF or the ABC operator for $\alpha \in (0, 1)$ and $a = 0$.
	
	By means of the Laplace transform method one can easily prove the following results:
	
	\begin{thm} \label{Thm-fde-1}
	Let us consider the differential equation
	\be \label{eq-fde1}
	^{CF} \textbf{D} _{0+} ^\alpha y (t) = F [t, \, y (t)] \, ,
	\ee
	with $\alpha \in (0, 1)$.
	
	Then, we can recast the latter in the following form,
	\be \label{eq-fde1-1}
	\frac{dy (t)}{dt} = \frac{1-\alpha}{M (\alpha)} \, \frac{d}{dt} F [t, \, y (t)] + \frac{\alpha}{M (\alpha)} \, F [t, \, y (t)] + 
	\frac{1-\alpha}{M (\alpha)} \,F [0^+, \, y (0^+)] \, \delta (t) \, ,
	\ee
	which is an ordinary differential equation of the first order on the space of tempered distribution over $\R^+$.
	\end{thm}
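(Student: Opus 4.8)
The plan is to apply the Laplace transform method to \eqref{eq-fde1}, exploiting the convolution structure of the Caputo--Fabrizio operator. First I would note that, for $a=0$,
$$ {}^{CF} \textbf{D} _{0+} ^\alpha y (t) = \frac{M(\alpha)}{1-\alpha}\,\bigl(h_\alpha * y'\bigr)(t), \qquad h_\alpha(t) := \exp\!\left[-\frac{\alpha}{1-\alpha}\,t\right], $$
so that, writing $\widetilde{g}(s)$ for the Laplace transform of $g$ and using $\mathcal{L}\{h_\alpha\}(s) = \bigl(s + \tfrac{\alpha}{1-\alpha}\bigr)^{-1}$ together with $\mathcal{L}\{y'\}(s) = s\widetilde{y}(s) - y(0^+)$, the convolution theorem gives
$$ \mathcal{L}\bigl\{{}^{CF} \textbf{D} _{0+} ^\alpha y\bigr\}(s) = \frac{M(\alpha)\,\bigl(s\widetilde{y}(s) - y(0^+)\bigr)}{(1-\alpha)\,s + \alpha}. $$
Here one assumes that $y$ and $F[\,\cdot\,,y(\cdot)]$ are Laplace-transformable (e.g.\ of exponential order), which is what the hypothesis ``sufficiently regular'' is meant to ensure.

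Next I would transform both sides of \eqref{eq-fde1}, equate the expression above to $\widetilde{F}(s) := \mathcal{L}\{F[t,y(t)]\}(s)$, clear the denominator, and solve algebraically for the transform of the derivative:
$$ s\widetilde{y}(s) - y(0^+) = \frac{1-\alpha}{M(\alpha)}\,s\,\widetilde{F}(s) + \frac{\alpha}{M(\alpha)}\,\widetilde{F}(s). $$
The left-hand side inverts to $dy/dt$, the term $\tfrac{\alpha}{M(\alpha)}\widetilde{F}(s)$ inverts to $\tfrac{\alpha}{M(\alpha)}F[t,y(t)]$, and the remaining term $\tfrac{1-\alpha}{M(\alpha)}s\widetilde{F}(s)$ is handled by writing $s\widetilde{F}(s) = \bigl(s\widetilde{F}(s) - F[0^+,y(0^+)]\bigr) + F[0^+,y(0^+)]$ and recalling that $\mathcal{L}\{F'\}(s) = s\widetilde{F}(s) - F[0^+,y(0^+)]$ while $\mathcal{L}\{\delta\}(s) = 1$. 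Collecting the three contributions and inverting (legitimate by injectivity of the Laplace transform on the relevant space) reproduces \eqref{eq-fde1-1} verbatim.

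I expect the only genuinely delicate point to be this last inversion: unless $F[0^+,y(0^+)] = 0$, multiplication by $s$ on the Laplace side is \emph{not} the transform of a locally integrable function, so the resulting identity is meaningful only after one enlarges the ambient space to the tempered distributions over $\R^+$ — which is precisely the content of the closing clause of the statement, and the reason the Dirac term appears. It is worth recording, to close the argument cleanly, that no boundary contributions beyond $y(0^+)$ and $F[0^+,y(0^+)]$ arise, because the Caputo--Fabrizio kernel $h_\alpha$ is bounded (indeed equal to one) at the origin, so that the only ``memory of the initial instant'' is carried by these two terms and by the homogeneous decaying exponential that one obtains, equivalently, by inverting $y(0^+)$ against the full rational symbol in $s$.
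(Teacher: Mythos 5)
Your proposal is correct and follows essentially the same route as the paper: apply the Laplace transform using the known symbol of the Caputo--Fabrizio operator, clear the denominator, split $s\,\widetilde{F}(s)$ as $\bigl(s\,\widetilde{F}(s)-F[0^+,y(0^+)]\bigr)+F[0^+,y(0^+)]$, and invert term by term using $\mathcal{L}\{F'\}$ and $\mathcal{L}\{\delta\}=1$. Your added remarks on the convolution-theorem derivation of the CF symbol and on the distributional meaning of the $\delta$ term are consistent with, and slightly more explicit than, the paper's argument.
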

	
	\begin{proof}
	First, let us recall the Laplace transform for the CF operator,
	$$
	\mathcal{L} \left\{ ^{CF} \textbf{D} _{0+} ^\alpha f (t)  \, ; \, s \right\} = 
	\frac{M (\alpha)}{1 - \alpha} \frac{s \, \wt{f} (s) - f(0^+)}{s + \frac{\alpha}{1-\alpha}} \, ,
	$$
	with $\wt{f} (s) \equiv \mathcal{L} [f(t) \, ; \, s]$ representing the Laplace transform of the function $f(t)$.
	
	If the Laplace transform the applied to both sides of Eq.~\eqref{eq-fde1}, one gets
	\be \label{eq-fde1-2}
	\frac{M (\alpha)}{1 - \alpha} \frac{s \, \wt{f} (s) - f(0^+)}{s + \frac{\alpha}{1-\alpha}} = \wt{\Phi} (s) \, ,
	\ee
	where we denoted $\Phi (t) \equiv F [t, \, y (t)]$.
	
	Now, one can easily manipulate algebraically Eq.~\eqref{eq-fde1-2} and recast it as follows
	\be \label{eq-fde1-3} \notag
	M (\alpha) \, [s \, \wt{y} (s) - y(0^+)] &\!\!=\!\!& (1 - \alpha) \, \left( s + \frac{\alpha}{1-\alpha} \right) \, \wt{\Phi} (s) \\
	&\!\!=\!\!& (1 - \alpha) \, s \, \wt{\Phi} (s) + \alpha \, \wt{\Phi} (s) \\ \notag
	&\!\!=\!\!& (1 - \alpha) \, [s \, \wt{\Phi} (s) - \Phi (0^+)] + (1 - \alpha) \, \Phi (0^+) + \alpha \, \wt{\Phi} (s) \, .
	\ee
	
	Hence, inverting back to the time domain and recalling that
	$$ \mathcal{L} [f' (t) \, ; \, s] = s \, \wt{f} (s) - f(0^+) \, \, , \qquad \mathcal{L} [\delta (t) \, ; \, s] = 1 \,\, , $$
	then we get exactly Eq.~\eqref{eq-fde1-1}.
 	\end{proof}
 	
 	\begin{thm} \label{Thm-fde-2}
	Let us consider the differential equation
	\be \label{eq-fde2}
	^{ABC} \textbf{D} _{0+} ^\alpha y (t) = F [t, \, y (t)] \, ,
	\ee
	with $\alpha \in (0, 1)$.
	
	Then, we can recast the latter in the following form,
	\be \label{eq-fde2-1}
	^{C} \textbf{D}_{0+} ^\alpha y (t) &\!\!=\!\!& \frac{1-\alpha}{B (\alpha)} \, {}^{C} \textbf{D} _{0+} ^\alpha F [t, \, y (t)] + 
	\frac{\alpha}{B (\alpha)} \, F [t, \, y (t)] + \\ \notag
	&\!\!+\!\!&
	\frac{(1-\alpha) \, t^{-\alpha}}{B (\alpha) \, \Gamma (1-\alpha)} \,F [0^+, \, y (0^+)] \, ,
	\ee
	which is a fractional differential equation of order on $\alpha \in (0, 1)$ over $\R^+$ that involves only Caputo factional derivatives (\ie $^{C} \textbf{D} _{0+} ^\alpha f (t) := J _{0+} ^{1 - \alpha} f' (t)$ for $\alpha \in (0, 1)$).
	\end{thm}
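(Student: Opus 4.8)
The plan is to mirror the Laplace-transform strategy used in the proof of Theorem~\ref{Thm-fde-1}. First I would record the Laplace transform of the ABC operator. Since \eqref{ABC-derivative} displays $^{ABC}\textbf{D}_{0+}^\alpha f$ as the convolution of $f'$ with the kernel $\tfrac{B(\alpha)}{1-\alpha}\,E_\alpha\!\left(-\tfrac{\alpha}{1-\alpha}\,t^\alpha\right)$ (cf.\ also Theorem~\ref{thm-3}), and using the standard pair $\mathcal{L}\{E_\alpha(-\lambda t^\alpha);s\} = s^{\alpha-1}/(s^\alpha+\lambda)$ together with $\mathcal{L}\{f';s\} = s\,\wt{f}(s)-f(0^+)$, the convolution theorem yields
\[
\mathcal{L}\left\{{}^{ABC}\textbf{D}_{0+}^\alpha f(t);s\right\} = \frac{B(\alpha)}{1-\alpha}\,\frac{s^\alpha\wt{f}(s)-s^{\alpha-1}f(0^+)}{s^\alpha+\frac{\alpha}{1-\alpha}}\,.
\]

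Next, writing $\Phi(t)\equiv F[t,y(t)]$, I would apply this to both sides of \eqref{eq-fde2} to get $\tfrac{B(\alpha)}{1-\alpha}\,\bigl(s^\alpha\wt{y}(s)-s^{\alpha-1}y(0^+)\bigr)/\bigl(s^\alpha+\tfrac{\alpha}{1-\alpha}\bigr) = \wt{\Phi}(s)$, and then clear the denominator exactly as in Theorem~\ref{Thm-fde-1}:
\[
B(\alpha)\,\bigl[s^\alpha\wt{y}(s)-s^{\alpha-1}y(0^+)\bigr] = (1-\alpha)\,s^\alpha\wt{\Phi}(s) + \alpha\,\wt{\Phi}(s)\,.
\]
The key algebraic observation is that $s^\alpha\wt{y}(s)-s^{\alpha-1}y(0^+) = \mathcal{L}\{{}^{C}\textbf{D}_{0+}^\alpha y(t);s\}$, and likewise $s^\alpha\wt{\Phi}(s) = \bigl[s^\alpha\wt{\Phi}(s)-s^{\alpha-1}\Phi(0^+)\bigr] + s^{\alpha-1}\Phi(0^+) = \mathcal{L}\{{}^{C}\textbf{D}_{0+}^\alpha\Phi(t);s\} + s^{\alpha-1}\Phi(0^+)$, where I use $^{C}\textbf{D}_{0+}^\alpha g = J_{0+}^{1-\alpha}g'$ and hence $\mathcal{L}\{{}^{C}\textbf{D}_{0+}^\alpha g;s\} = s^{\alpha-1}(s\,\wt{g}(s)-g(0^+))$.

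Finally I would invert termwise, using $\mathcal{L}^{-1}\{s^{\alpha-1};t\} = t^{-\alpha}/\Gamma(1-\alpha)$ for $0<\alpha<1$, and divide through by $B(\alpha)$; collecting the three resulting terms reproduces \eqref{eq-fde2-1}. The delicate point — and the place where care is needed rather than a genuine obstacle — is justifying that the termwise inversion is legitimate: one needs $\Phi=F[\cdot,y(\cdot)]$ to be regular enough for $^{C}\textbf{D}_{0+}^\alpha\Phi$ to exist and to admit a Laplace transform (e.g.\ $\Phi\in AC(0,b)$, consistent with the hypothesis $f\in AC(a,b)$ used throughout), and one should note that the identity is to be understood over $\R^+$ with the $t^{-\alpha}$ forcing term interpreted in the weak/tempered-distribution sense near $t=0$, in direct parallel with the $\delta(t)$ contribution appearing in Theorem~\ref{Thm-fde-1}.
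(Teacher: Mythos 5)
Your proposal is correct and follows exactly the route the paper intends: the Laplace-transform method of Theorem~\ref{Thm-fde-1}, using $\mathcal{L}\{{}^{C}\textbf{D}_{0+}^{\alpha}f;s\}=s^{\alpha}\wt{f}(s)-s^{\alpha-1}f(0^{+})$ and $\mathcal{L}\{t^{-\alpha};s\}=\Gamma(1-\alpha)\,s^{\alpha-1}$, which the paper's own (sketched) proof merely cites without carrying out the algebra. Your write-up simply fills in the steps the paper leaves implicit, including the derivation of the ABC transform and the add-and-subtract of $s^{\alpha-1}\Phi(0^{+})$, so it matches the paper's argument in substance.
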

		
	\begin{proof}
	One shall proceed in analogy with the proof of Theorem~\ref{Thm-fde-1}. Specifically, one should recall that
	$$ \mathcal{L} [^{C} \textbf{D} _{0+} ^\alpha f (t) \, ; \, s] = s^\alpha \, \wt{f} (s) - s^{\alpha-1} \, f(0^+) \, , $$
	for $\alpha \in (0,1)$, and that
	$$\mathcal{L} [ t^p \, ; \, s] = \frac{\Gamma (p + 1)}{s ^{p+1}} \, , \quad p>-1 \, . $$
 	\end{proof}	
 	
 	Equivalently, by recasting the expressions in Eq.~\eqref{eq-fde1-3} (or just by integrating both sides of Eq.~\eqref{eq-fde1-2}), one can also show that Eq.~\eqref{eq-fde1} is formally equivalent to
 	\be 
 	y(t) = y(0^+) + \frac{1-\alpha}{M(\alpha)} \, F [t, \, y (t)] + \frac{\alpha}{M(\alpha)} \int _0 ^t F [\tau, \, y (\tau)] \, d \tau \, ,
 	\ee
 	which is nothing but an (ordinary) integral equation for the function $y(t)$.
 	
 	By following a similar procedure for the ABC case, one can easily find that Eq.~\eqref{eq-fde2} is formally equivalent to
 	\be 
	y (t) = y(0^+) + \frac{1-\alpha}{B (\alpha)} \, F [t, \, y (t)] + 
	\frac{\alpha}{B (\alpha)} \, J _{0+} ^{\alpha} F [t, \, y (t)] 
	\, ,
 	\ee
that, again, is nothing but a fractional integral equation for the function $y(t)$ involving only the Riemann-Liouville fractional integral.

	\section{Physical remarks: the Scott-Blair model} \label{sec:sb}
	Let us now discuss some physical implications of the application of these alleged new fractional operators in linear viscoelasticity. In particular, let us consider as a working example the so called Scott-Blair model \cite{Mainardi_BOOK10, MS}, \ie
	\be \label{SB-eq}
	\sigma (t) = \eta \, ^{C} \textbf{D} _{0+} ^\alpha \varepsilon (t) \, , \qquad \alpha \in (0, 1) \, ,
	\ee
	with $\sigma$ and $\varepsilon$ denoting respectively the stress and strain function for the considered material, $\eta > 0$ being the viscosity term and $t>0$ representing the non-dimensional time variable (\ie the time rescaled by the characteristic time-scale of the system).
	
	Recalling that, in linear viscoelasticity, the relaxation modulus $G(t)$ has the following property \cite{Mainardi_BOOK10, MS}, 
	$$ \wt{\sigma} (s) = s \, \wt{G} (s) \, \wt{\epsilon} (s) \, , $$
	in the Laplace domain, then it is easy to see that
	\be \label{SB-G}
	G_{\tiny \mbox{Scott-Blair}} (t) = \frac{\eta}{\Gamma (1 - \alpha)} \, t^{- \alpha} \, ,
	\ee
	assuming $\varepsilon (0^+) = 0$ (see \eg \cite{AG-FCAA, Mainardi_BOOK10, MS}).
	
	It is now interesting to see what happens if we replace the Caputo fractional derivative with the CF or the ABC operator.
	
	Employing the CF operator in Eq.~\eqref{SB-eq}, again assuming $\varepsilon (0^+) = 0$, one can easily compute the relaxation modulus of the resulting model in the Laplace domain, \ie
	\be 
	\wt{G} _{CF} (s) = \frac{\eta \, M(\alpha)}{1 - \alpha} \, \left( s + \frac{\alpha}{1 - \alpha} \right)^{-1} \, ,
	\ee
	that, once inverted back to the time domain gives
	\be \label{CF-G}
	G_{CF} (t) = \frac{\eta \, M(\alpha)}{1 - \alpha} \, \exp \left( - \frac{\alpha}{1 - \alpha} \, t \right) \, ,
	\ee
	from which we can infer, by comparing this relaxation modulus with the well established results discussed in \cite{Mainardi_BOOK10, MS}, that a Scott-Blair model involving the CF operator is nothing but an ordinary Maxwell model with a rescaled glass modulus (\ie $G(0^+)$) and a relaxation time rescaled by a factor $(1 - \alpha)/\alpha$.
	
	Analogously, if we perform the same analysis for a Scott-Blair model involving the ABC operator, one can easily conclude that
	\be \label{AB-G}
	G_{ABC} (t) = \frac{\eta \, B(\alpha)}{1 - \alpha} \, E_\alpha \left( - \frac{\alpha}{1 - \alpha} \, t ^\alpha \right) \, ,
	\ee
	that, again, is nothing but a fractional Maxwell model of order $\alpha$ with rescaled glass modulus and relaxation time.

	A pictorial comparison between the relaxation moduli obtained from the traditional Scott-Blair model and the results displayed in Eq.~\eqref{CF-G} and Eq.~\eqref{AB-G} is provided in \figurename~\ref{Fig-1}. Specifically, from this plot we can immediately infer that $G_{ABC} (t)$ clearly shows a short-time stretched exponential decay (with an infinite negative derivative) that transitions continuously to a power low decay for long times, which is the main feature of the fractional Maxwell model. Whereas, if we consider the behaviour of $G_{CF} (t)$ one can clearly see that, while the Scott-Blair model involving the ABC operator can somehow resemble the typical Scott-Blair behaviour for long times ($\sim t^{-\alpha}$), the model with the CF operator features a purely exponential fall-off, which is what one would expect from a purely ordinary theory. Besides, as stressed above, the relaxation modulus $G_{CF} (t)$ corresponds to the one of the ordinary Maxwell model with rescaled glass modulus and relaxation time.
		
	\begin{figure}[h!]
	\centering
	\includegraphics[scale=0.5]{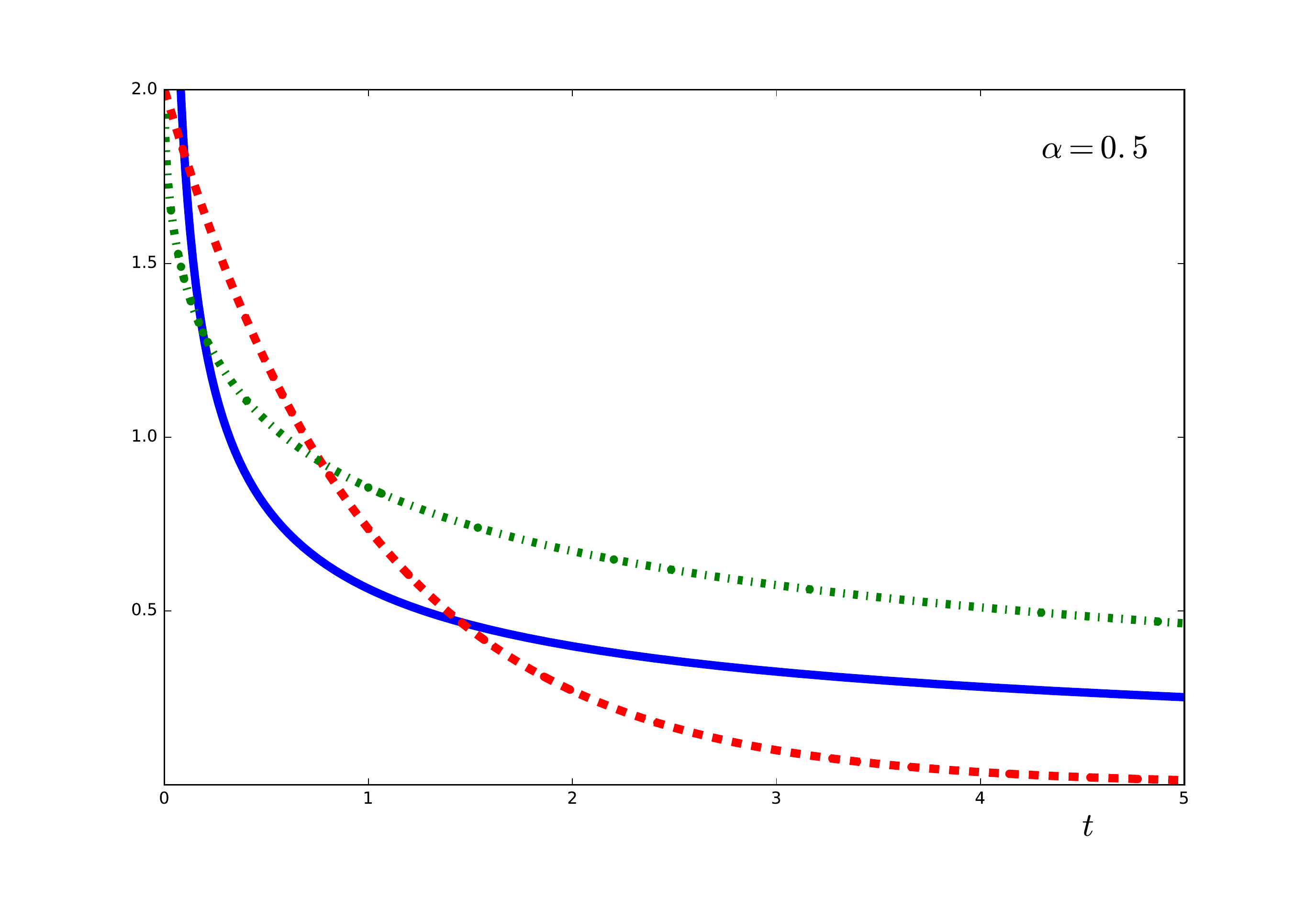}
	\caption{Comparison between the Scott-Blair relaxation modulus (solid line), $G_{CF} (t) / M(\alpha)$ (dashed line) and $G_{ABC} (t) / B(\alpha)$ (dotted line), for $\eta = 1$ and $\alpha = 0.5$. } \label{Fig-1}
	\end{figure}		
	
	Consequently, from the results of this section one can fairly conclude that, even from a purely physical perspective, the CF and ABC differential operators do not seem to add anything new to the realm of applications of fractional calculus to physical systems.
		
	\section{Concluding remarks}	
	
	In light of the old feshoned definition of fractional derivative, one can infer that the CF and ABC operators do not meet any of the requirements for them to be referred to as new fractional derivatives. In particular, in view of Theorem~\ref{thm-4} and \ref{Thm-fde-1} one can definitely state that the CF is neither a new definition of fractional derivative nor a fractional operator, as it was also stressed in \cite{OM, Tarasov}. Whereas, from Theorem~\ref{thm-5} and \ref{Thm-fde-2}, one can conclude that the ABC operator has still some resemblance of a fractional nature, even thought it does not provide any real improvement with respect to the theory of fractional calculus based on the Caputo derivative or, more generally, with respect to the Prabhakar calculus.

	Furthermore, in Section~\ref{sec:sb} we have shown that these two operators do not seem to add any new insight to the (already well established) theory of linear viscoelasticity, as they induce a mere reformulation of some well known results when employed in the constitutive equation for a given material.
		
\section*{Acknowledgments}	
	The work of the authors has been carried out in the framework of the activities of the National Group of Mathematical Physics (GNFM, INdAM).	
	
	Moreover, the work of A.G. has been partially supported by \textit{GNFM/INdAM Young Researchers Project} 2017 ``Analysis of Complex Biological Systems''.

\section*{\textbf{Note Added}}
	After the publication this paper, we became aware of the work by Fernandez and Baleanu \cite{FB} in which the authors provide an alternative derivation of Eq.~\eqref{eq-thm-5}. We thank Abdon Atangana for bringing this reference to our attention.
	
	The independent derivation presented here follows directly, as a very particular case, from a general result concerning Prabhakar integrals (\ie Theorem~\ref{thm-1}, introduced and proved in this manuscript).
	


\begin{thebibliography}{}
\bibitem{Caputo-Fabrizio}
Caputo M, Fabrizio M.
A new definition of fractional derivative without singular kernel.
Progr Fract Differ Appl 2015;1(2):73--85. 
DOI: 10.12785/pfda/010201.
  
\bibitem{FM-ML}
Gorenflo R, Kilbas AA, Mainardi F, Rogosin SV.
{Mittag-Leffler functions, related topics and applications}.
Berlin: Springer; 2014.      

\bibitem{ABC}
Atangana A, Baleanu D. 
New fractional derivatives with nonlocal and non-singular kernel: Theory and application to heat
transfer model.
Thermal Science 2016;20(2):763--769.
DOI: 10.2298/TSCI160111018A. 

\bibitem{Mainardi-Gorenflo-1997}
Gorenflo R, Mainardi F. 
Fractional Calculus: Integral and Differential Equations of Fractional Order.
In: Carpinteri A, Mainardi F, editors.
Fractals and fractional calculus in continuum mechanics. 
New York: Springer-Verlag, Wien; 1997. 

\bibitem{Mainardi-1997}
Mainardi F. 
Fractional Calculus: Some basic problems in continuum and statistical mechanics.
In: Carpinteri A, Mainardi F, editors.
Fractals and fractional calculus in continuum mechanics. 
New York: Springer-Verlag, Wien; 1997. 
  
\bibitem{Mainardi_BOOK10} 
Mainardi F.
Fractional calculus and waves in linear viscoelasticity. 
London: Imperial College Press; 2010.  
   
\bibitem{SKM}
Samko S G, Kilbas A A, Marichev O I. 
Fractional integrals and derivatives: Theory and Applications. 
Gordon and Breach, Yverdon, 44; 1993.

\bibitem{fractional} 
Ortigueira M D, Tenreiro Machado J.
What is a fractional derivative?. 
Journal of Computational Physics 2015;293:4--13.
DOI: 10.1016/j.jcp.2014.07.019

\bibitem{A} 
Algahtani O.
Comparing the Atangana-Baleanu and Caputo-Fabrizio derivative with fractional order: Allen Cahn model.
Chaos, Solitons and Fractals 2016;89:552--559. 
DOI: 10.1016/j.chaos.2016.03.026

\bibitem{PBK}
Prabhakar T R. 
A singular integral equation with a generalized Mittag-Leffler function in the kernel. 
Yokohama Mathematical Journal 1971;19:7--15.

\bibitem{CGV}
Colombaro I, Giusti A, Vitali S.
Storage and Dissipation of Energy in Prabhakar Viscoelasticity
Mathematics 2018;6(2):15.
DOI: 10.3390/math6020015.

\bibitem{GGPT}
Garra R, Gorenflo R, Polito F, Tomovski Z.
Hilfer-Prabhakar derivatives and some applications.
Applied mathematics and computation 2014;242:576--589.
DOI: 10.1016/j.amc.2014.05.129.

\bibitem{GarraGarrappa}
Garra R, Garrappa R.
The Prabhakar or three parameter Mittag-Leffler function: Theory and application.
Communications in Nonlinear Science and Numerical Simulation 2018;56:314--329.
DOI: 10.1016/j.cnsns.2017.08.018.

\bibitem{Garrappa}
Garrappa R. 
Gr\"{u}nwald-Letnikov operators for fractional relaxation in Havriliak-Negami models.
Communications in Nonlinear Science and Numerical Simulation 2016;38:178--191.
DOI: 10.1016/j.cnsns.2016.02.015.

\bibitem{RG-FM-GM}
Garrappa R, Mainardi F, Maione G. 
Models of dielectric relaxation based on completely monotone functions.
Fract Calc Appl Anal 2016;19:1105--1160.
DOI: 10.1515/fca-2016-0060. 

\bibitem{HMS}
Haubold HJ, Mathai AM, Saxena RK. 
Mittag-Leffler functions and their applications, 
Journal of Applied Mathematics 2011;2011, Article ID 298628, 51 pages. 
DOI: 10.1155/2011/298628.

\bibitem{KSS}
Kilbas A, Saigo M, Saxena R. 
Generalized Mittag-Leffler function and generalized fractional calculus operators, 
Integral Transforms Spec Funct 2004;15:31--49.
DOI: 10.1080/10652460310001600717.

\bibitem{MG}
Mainardi F, Garrappa R.
On complete monotonicity of the Prabhakar function and non-Debye relaxation in dielectrics, 
J Comput Phys 2015;293:70--80.
DOI: 10.1016/j.jcp.2014.08.006.

\bibitem{PT}
Polito F, Tomovski Z.
Some properties of Prabhakar-type fractional calculus operators. 
Fractional Differential Calculus 2016;6(1):73--94. 
DOI: 10.7153/fdc-06-05

\bibitem{ST}
Srivastava H M, Tomovski Z.
Fractional calculus with an integral operator containing a generalized Mittag-Leffler function in the kernel. 
Applied Mathematics and Computation 2009;211(1):198--210.
DOI: 10.1016/j.amc.2009.01.055

\bibitem{GC}
Giusti A, Colombaro I.
Prabhakar-like fractional viscoelasticity.
Communications in Nonlinear Science and Numerical Simulation 2018;56:138--143.
DOI: 10.1016/j.cnsns.2017.08.002.         

\bibitem{MS} 
Mainardi F, Spada G.
Creep, Relaxation and Viscosity Properties for Basic Fractional Models in Rheology. 
The European Physical Journal, Special Topics 2011; 193:133--160.
DOI: 10.1140/epjst/e2011-01387-1.

\bibitem{AG-FCAA}
Giusti A.
On infinite order differential operators in fractional viscoelasticity.
Fract Calc Appl Anal 2017;20(4):854--867.
DOI: 10.1515/fca-2017-0045.

\bibitem{OM}
Ortigueira M D, Tenreiro Machado J.
A critical analysis of the Caputo-Fabrizio operator. 
Communications in Nonlinear Science and Numerical Simulation 2018;59:608--611.
DOI: 10.1016/j.cnsns.2017.12.001

\bibitem{Tarasov}
Tarasov V E.
No Nonlocality. No Fractional Derivative.
Communications in Nonlinear Science and Numerical Simulation 2018;62:157--163. 
DOI: 10.1016/j.cnsns.2018.02.019

\bibitem{FB}
Fernandez A, Baleanu D.
The mean value theorem and Taylor's theorem for fractional derivatives with Mittag-Leffler kernel.
Advances in Difference Equations 2018; 2018:86.
DOI: 10.1186/s13662-018-1543-9

\end{thebibliography}
\end{document}